\def\squarebox#1{\hbox to #1{\hfill\vbox to #1{\vfill}}}
\newcommand{\qed}{\hspace*{\fill}
\vbox{\hrule\hbox{\vrule\squarebox{.667em}\vrule}\hrule}\smallskip}
\newtheorem{teorema}{Theorem}[section]
\newtheorem{lema}[teorema]{Lemma}
\newtheorem{proposicao}[teorema]{Proposition}
\newenvironment{proof}{\noindent {\bf Proof:}}{\hfill $\qed $ \newline}
\begin{document}

\title{Entropy and its variational principle for noncompact metric spaces}
\author{Mauro Patr\~{a}o}
\maketitle

\begin{abstract}
In the present paper, we introduce a natural extension of
AKM-topological entropy for noncompact spaces and prove a
variational principle which states that the topological entropy,
the supremum of the measure theoretical entropies and the minimum
of the metric theoretical entropies always coincide. We apply the
variational principle to show that the topological entropy of
automorphisms of simply connected nilpotent Lie groups always
vanishes. This shows that the classical formula for the entropy of
an automorphism of a noncompact Lie group is just an upper bound
for its topological entropy.
\end{abstract}

\noindent \textit{AMS 2000 subject classification}: Primary: 37B40
37A35, Secondary: 22E25.

\noindent \textit{Key words:} Topological entropy, variational
principle, non-compact metric spaces, automorphisms of Lie groups.

\section{Introduction}

Topological entropy was introduced in \cite{akm} by Adler, Konheim
and MacAndrew (AKM) to study the dynamic of a continuous map $T :
X \to X$ defined on a compact space $X$. They have also
conjectured the well known variational principle which states that
\[
h(T) = \sup_{\mu} h_{\mu}(T),
\]
where $h(T)$ is the topological entropy of $T$, $h_{\mu}(T)$ is
the $\mu$-entropy of $T$ and the supremum is taken over all
$T$-invariant probabilities $\mu$ on $X$.

R. Bowen has extended in \cite{bowen} the concept of topological
entropy for noncompact metric spaces. This approach uses the
concept of generator sets, which are defined by means of a given
metric. For compact spaces this definition of entropy is, in fact,
independent of the metric and, indeed, coincides with the
AKM-topological entropy.

However, for noncompact spaces Bowen's entropy depends on the
metric. In Lemma 1.5 of \cite{handel}, it was proved the following
variational principle for locally compact spaces
\[
\sup_{\mu} h_{\mu}(T) = \inf_{d} h_{d}(T),
\]
where $h_d(T)$ is the $d$-entropy of $T$ and the infimum is taken
over all metric $d$ on $X$.

In the present paper, we improve this result showing that the
above infimum is always attained at $d$, whenever $d$ is a metric
satisfying some special conditions. Theses metrics are called
admissible metrics and always exist when $X$ is a locally compact
separable space. Moreover the supremum and the minimum coincide
with a natural extension of AKM-topological entropy, introduced in
the second section. In the third section, this variational
principle is proven.

In the last section, we apply the variational principle to
determine the topological entropy of automorphisms of some Lie
groups. First we consider linear isomorphisms of a finite
dimensional vector space and characterize their recurrent sets in
terms of their multiplicative Jordan decompositions. We show that
the topological entropy of these linear isomorphisms always
vanishes. This shows that the classical formula for its
$d$-entropy, where $d$ is the euclidian metric, is just an upper
bound for its null topological entropy. Remember that this
classical formula is given by
\[
h_d(T) = \sum_{|\lambda| > 1} \log |\lambda|,
\]
where $\lambda$ runs through the eigenvalues of the linear
isomorphism $T$ (cf. Theorem 8.14 in \cite{walters}). Thus it
might be an interesting problem to calculate the topological
entropy of a given automorphism $\phi$ of a general noncompact Lie
group $G$, since the classical formula for its $d$-entropy, where
$d$ is some invariant metric, is as well just an upper bound for
its topological entropy. We conclude this paper providing an
answer for this problem when $G$ is a simply connected nilpotent
Lie group
. We show that the topological entropy of its
automorphisms also vanishes.

\section{Topological entropy and admissible metrics}\label{sectopent}

We start this section presenting an extension of the
AKM-topological entropy introduced in \cite{akm}. Let $X$ be a
topological space and $T : X \to X$ be a proper map, i.e., $T$ is
a continuous map such that the pre-image by $T$ of any compact set
is compact. An \emph{admissible} covering of $X$ is an open and
finite covering $\alpha$ of $X$ such that, for each $A \in
\alpha$, the closure or the complement of $A$ is compact. Given an
admissible covering $\alpha$ of $X$, for every $n \in \mathbb{N}$,
we have that the set given by
\[
\alpha^n = \{A_0 \cap T^{-1}(A_1) \cap \ldots \cap T^{-n}(A_n) :
A_i \in \alpha\}
\]
is also an admissible covering of $X$, since $T$ is a proper map.
Given an admissible covering $\alpha$ of $X$, we denote by
$N(\alpha^n)$ the smallest cardinality of all sub-coverings of
$\alpha^n$. Exactly as in the compact case, it can be shown that
the sequence $\log N(\alpha^n)$ is subadditive, which implies the
existence of the following limit
\[
h(T, \alpha) = \lim_{n \to \infty} \frac{1}{n} \log N(\alpha^n).
\]
The \emph{topological entropy of the map $T$} is thus defined as
\[
h(T) = \sup_{\alpha} h(T, \alpha),
\]
where the supremum is taken over all admissible coverings $\alpha$
of $X$. We note that, when $X$ is already compact, the above
definition coincides with the AKM definition, since thus every
continuous map is proper and every open and finite covering of $X$
is admissible.

The following result generalizes, for non-compact spaces, the
relation of the topological entropies of two semi-conjugated maps.

\begin{proposicao}\label{propsemiconjugacao}
Let $T : X \to X$ and $S : Y \to Y$ be two proper maps, where $X$
and $Y$ are topological spaces. If $f : Y \to X$ is a proper
surjective map such that $f \circ S = T \circ f$, then we have
that $h(T) \leq h(S)$.
\end{proposicao}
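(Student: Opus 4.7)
The plan is to mimic the standard compact-case argument by pulling an admissible covering $\alpha$ of $X$ back to an admissible covering $f^{-1}(\alpha) = \{f^{-1}(A) : A \in \alpha\}$ of $Y$, and comparing the associated entropies. The goal will be to prove the inequality
\[
h(T,\alpha) \leq h(S, f^{-1}(\alpha))
\]
for every admissible covering $\alpha$ of $X$, from which $h(T) \leq h(S)$ follows by taking the supremum.

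The first step is to verify that $f^{-1}(\alpha)$ is an admissible covering of $Y$. Openness and finiteness are immediate from the continuity of $f$ and the finiteness of $\alpha$. The dichotomy condition is the only place where the hypotheses on $f$ really enter: if $A \in \alpha$ has compact complement, then $Y \setminus f^{-1}(A) = f^{-1}(X \setminus A)$ is compact by properness of $f$; if instead $\overline{A}$ is compact, then $\overline{f^{-1}(A)} \subseteq f^{-1}(\overline{A})$ is a closed subset of a compact set, hence compact. So $f^{-1}(\alpha)$ is admissible, and this is the step I expect to be the main (though mild) obstacle.

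The second step is the compatibility between the refinements $\alpha^n$ and the pullback: the relation $f \circ S = T \circ f$ iterates to $f \circ S^i = T^i \circ f$, which yields $S^{-i}(f^{-1}(A)) = f^{-1}(T^{-i}(A))$, and therefore
\[
f^{-1}(\alpha)^n = \{f^{-1}(B) : B \in \alpha^n\}.
\]
The third step uses surjectivity: if $\{f^{-1}(B_1), \ldots, f^{-1}(B_k)\}$ is a subcovering of $f^{-1}(\alpha)^n$, then $\{B_1, \ldots, B_k\}$ is a subcovering of $\alpha^n$, since any $x \in X$ equals $f(y)$ for some $y \in Y$ and $y \in f^{-1}(B_j)$ forces $x \in B_j$. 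Hence $N(\alpha^n) \leq N(f^{-1}(\alpha)^n)$.

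Dividing by $n$ and letting $n \to \infty$ gives $h(T, \alpha) \leq h(S, f^{-1}(\alpha)) \leq h(S)$, and taking the supremum over admissible coverings $\alpha$ of $X$ yields $h(T) \leq h(S)$, as desired.
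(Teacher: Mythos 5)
Your proposal is correct and follows essentially the same route as the paper: pull back the admissible covering via $f$, use $f\circ S=T\circ f$ to identify $f^{-1}(\alpha)^n$ with the pullbacks of elements of $\alpha^n$, and use surjectivity to compare minimal subcoverings, giving $N(\alpha^n)\leq N(f^{-1}(\alpha)^n)$ and hence $h(T,\alpha)\leq h(S,f^{-1}(\alpha))\leq h(S)$. If anything, you supply more detail than the paper on why $f^{-1}(\alpha)$ is admissible (the closure/complement dichotomy under a proper map), and you correctly observe that only the one-sided cardinality inequality is needed, whereas the paper records the equality $N(\alpha^n)=N(f^{-1}(\alpha)^n)$.
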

\begin{proof}
Let $\alpha$ be an admissible covering of $X$.  Since $f$ is a
proper map, it follows that
\[
f^{-1}(\alpha) = \{f^{-1}(A) : A \in \alpha\}
\]
is an admissible covering of $Y$. We claim that if $\beta$ is a
subset of $\alpha^n$, then $f^{-1}(\beta)$ is a subset of
$f^{-1}(\alpha)^n$. In fact, $B \in \beta$ if and only if
\[
B = A_0 \cap T^{-1}(A_1) \cap \ldots \cap T^{-n}(A_n)
\]
where $A_i \in \alpha$, for each $i \in \{0, \ldots, n\}$. Thus we
have that
\begin{eqnarray}
f^{-1}(B) & = & f^{-1}(A_0) \cap f^{-1}(T^{-1}(A_1)) \cap \ldots \cap f^{-1}(T^{-n}(A_n)) \nonumber \\
& = & f^{-1}(A_0) \cap S^{-1}(f^{-1}(A_1)) \cap \ldots \cap
S^{-n}(f^{-1}(A_n)), \nonumber
\end{eqnarray}
where we used that $f^{-1} \circ T^{-i} = S^{-i} \circ T^{-1}$,
since $f \circ S = T \circ f$. Thus it follows that $f^{-1}(\beta)
\subset f^{-1}(\alpha)^n$. Reciprocally, we claim that if $\gamma$
is a subset of $f^{-1}(\alpha)^n$, then $\gamma = f^{-1}(\beta)$,
where $\beta$ is some subset of $\alpha^n$. Proceeding
analogously, if $C \in \gamma$, then
\begin{eqnarray}
C & = & f^{-1}(A_0) \cap S^{-1}(f^{-1}(A_1)) \cap \ldots \cap S^{-n}(f^{-1}(A_n)) \\
& = & f^{-1}(A_0 \cap T^{-1}(A_1) \cap \ldots \cap T^{-n}(A_n)),
\nonumber
\end{eqnarray}
where $A_i \in \alpha$, for each $i \in \{0, \ldots, n\}$. Thus it
follows that $C = f^{-1}(B)$, for some $B \in \alpha^n$, which
implies that $\gamma = f^{-1}(\beta)$, where $\beta$ is some
subset of $\alpha^n$.

If $\beta$ is a sub-covering of $\alpha^n$, then $f^{-1}(\beta)$
is a sub-covering of $f^{-1}(\alpha)^n$. Reciprocally, if $\gamma$
is a sub-covering of $f^{-1}(\alpha)^n$, then $\gamma =
f^{-1}(\beta)$, where $\beta$ is some sub-covering of $\alpha^n$.
In fact, we have already known that $\gamma = f^{-1}(\beta)$, for
some subset $\beta$ of $\alpha^n$. We have to show just that
$\beta$ is a covering of $X$. But this is immediate, since $\gamma
= f^{-1}(\beta)$ is a covering of $Y$, $f$ is surjective and $B =
f(f^{-1}(B))$.

Hence we have that $N(\alpha^n) = N(f^{-1}(\alpha)^n)$ and taking
logarithms, dividing by $n$ and taking limits, it follows that
\[
h(T, \alpha) \leq h(S, f^{-1}(\alpha)) \leq h(S).
\]
Since $\alpha$ is an arbitrary admissible covering of $X$, we get
that $h(T) \leq h(S)$.
\end{proof}

Now we introduce the concept of entropy associated to some given
metric in two different ways. First we remember the classical
definition, introduced in \cite{bowen}. Given a metric space $(X,
d)$ and a continuous map $T : X \to X$, we consider the metric
given by
\[
d_n(x, y) = \max \{ d(T^i(x), T^i(y)) : 0 \leq i \leq n \},
\]
which is equivalent to the original metric $d$. Given a subset $Y
\subset X$, a subset $G$ is an $(n, \varepsilon)$-generator of $Y$
if and only if, for every $y \in Y$, there exists $x \in G$ such
that $d_n(x, y) < \varepsilon$. In other words, the collection of
all $\varepsilon$-balls of $d_n$ centered at points of $G$ is in
fact a covering of $Y$. We denote by $G_n(\varepsilon, Y)$ the
smallest cardinality of all $(n, \varepsilon)$-generators of $Y$
and define
\[
g(\varepsilon, Y) = \lim_{n \to \infty} \sup \frac{1}{n} \log
G_n(\varepsilon, Y).
\]
It can be proved that $g(\varepsilon, Y)$ is monotone with respect
to $\varepsilon$, so we can define
\[
h_d(T, Y) = \lim_{\varepsilon \downarrow 0} g(\varepsilon, Y).
\]
The \emph{$d$-entropy of the map $T$} is thus defined as
\[
h_d(T) = \sup_{K} h_d(T, K),
\]
where the supremum is taken over all compact subsets $K$ of $X$.

The definition of $d$-entropy which we present in this paper is
given by
\[
h^d(T) = \sup_{Y} h_d(T, Y),
\]
where now the supremum is taken over all subsets $Y$ of $X$
instead of just the compact ones. Note that here the metric $d$
lies at the superscript. Since we have that $h_d(T, Y)$ is
monotone with respect to $Y$, it follows that
\[
h^d(T) = h_d(T, X).
\]
We immediately notice that always $h_d(T) \leq h^d(T)$ and, when
$X$ is itself compact, we get the equality. Denoting
$G_n(\varepsilon, X)$ and $g(\varepsilon, X)$, respectively, by
$G_n(\varepsilon)$ and $g(\varepsilon)$, we get that
\[
h^d(T) = \lim_{\varepsilon \downarrow 0} g(\varepsilon)
\]
and that
\[
g(\varepsilon) = \lim_{n \to \infty} \sup \frac{1}{n} \log
G_n(\varepsilon).
\]

Our first result is a generalization of the well know result which
states that, if $(X, d)$ is compact, then the topological and the
metric entropies coincide. In the non-compact case, this remains
true but we need to ask for some regularity in the metric. Let
$(X, d)$ be a metric space. The metric $d$ is \emph{admissible} if
the following conditions are verified:
\begin{itemize}
\item[(1)] If $\alpha_{\delta} = \{B(x_1, \delta), \ldots, B(x_k,
\delta)\}$ is a covering of $X$, for every $\delta \in (a, b)$,
where $0 < a < b$, then there exists $\delta_{\varepsilon} \in (a,
b)$ such that $\alpha_{\delta_{\varepsilon}}$ is admissible.

\item[(2)] Every admissible covering of $X$ has a Lebesgue number.
\end{itemize}
We observe that, if $(X, d)$ is compact, then $d$ is automatically
admissible.

\begin{proposicao}\label{propequiv}
Let $T : X \to X$ be a proper map, where $(X, d)$ is a metric
space. If $d$ is an admissible metric, then $h(T) = h^d(T)$.
\end{proposicao}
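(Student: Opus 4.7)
The plan is to prove $h(T) \leq h^d(T)$ and $h^d(T) \leq h(T)$ separately, each direction using one of the two admissibility conditions. For the first inequality, fix an admissible cover $\alpha$ of $X$ and use condition (2) to obtain a Lebesgue number $\delta > 0$ for $\alpha$. If $G$ is any $(n, \delta/2)$-generator of $X$, then for each $x \in G$ and each $0 \leq i \leq n$, the ball $B(T^i x, \delta/2)$ is contained in some $A_i^x \in \alpha$; hence
\[
B_{d_n}(x, \delta/2) = \bigcap_{i=0}^n T^{-i}(B(T^i x, \delta/2)) \subset \bigcap_{i=0}^n T^{-i}(A_i^x) \in \alpha^n.
\]
Since the $d_n$-balls around $G$ cover $X$, so do the resulting members of $\alpha^n$, yielding $N(\alpha^n) \leq G_n(\delta/2)$. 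Taking logarithms, dividing by $n$, and passing to the limit gives $h(T, \alpha) \leq g(\delta/2) \leq h^d(T)$, and the supremum over $\alpha$ completes this direction.

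For the reverse inequality I would build, for each small $\varepsilon > 0$, an admissible cover $\alpha_\varepsilon$ whose members have $d$-diameter less than $2\varepsilon$. Starting from a finite cover $\{B(x_1, \varepsilon/2), \ldots, B(x_k, \varepsilon/2)\}$ of $X$, the same centers continue to give a cover of $X$ by balls of radius $\delta$ for every $\delta$ in the open interval $(\varepsilon/2, \varepsilon)$, so condition (1) furnishes some $\delta_* \in (\varepsilon/2, \varepsilon)$ making $\alpha_\varepsilon := \{B(x_i, \delta_*)\}$ admissible. Every element of $\alpha_\varepsilon^n$ then has $d_n$-diameter at most $2\delta_* < 2\varepsilon$, so choosing one representative from each element of a minimal subcover of $\alpha_\varepsilon^n$ produces an $(n, 2\varepsilon)$-generator of $X$ of size $N(\alpha_\varepsilon^n)$. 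Hence $G_n(2\varepsilon) \leq N(\alpha_\varepsilon^n)$, and passing to limits yields $g(2\varepsilon) \leq h(T, \alpha_\varepsilon) \leq h(T)$; sending $\varepsilon \downarrow 0$ gives $h^d(T) \leq h(T)$.

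The main obstacle is the second direction. Condition (1) has to be applied with care: one must verify that the openness of the $(\varepsilon/2)$-balls gives room for an open interval of radii on which the family still covers $X$, before invoking (1) to replace that interval's worth of covers by a single admissible one. One also has to juggle the strict-versus-non-strict inequalities hidden in the definitions of Lebesgue number, diameter, and $(n, \varepsilon)$-generator, choosing constants (like $\delta/2$ above and $2\delta_* < 2\varepsilon$) so that the estimates line up in both directions. Finally, there is the issue of scales at which $X$ admits no finite cover by small balls: at such scales the ball-cover construction breaks down, so either one must exploit the monotonicity of $g(\varepsilon)$ to restrict attention to scales where the construction works, or argue that infinitude of $h^d(T)$ forces infinitude of $h(T)$; sorting out this dichotomy is where the two admissibility conditions must be used in tandem.
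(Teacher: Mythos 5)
Your argument is essentially the paper's: the Lebesgue-number direction reproduces the paper's comparison of a minimal subcover of $\alpha^n$ with an $(n,\cdot)$-generator (its inequality $G_n(|\alpha|)\leq N(\alpha^n)$ together with the step $N(\alpha^n)\leq N(\alpha_{\delta_\varepsilon}^n)$), and your condition-(1) direction reproduces its construction of an admissible cover by balls of radius $\delta_*\in(\varepsilon/2,\varepsilon)$ and the estimate $G_n(2\varepsilon)\leq N(\alpha_\varepsilon^n)$, so the proposal is correct and on the same route. The only real divergence is cosmetic and in your favor: your admissible ball cover does not depend on $n$, whereas the paper builds $\alpha_{\delta_\varepsilon}$ from the orbit points $T^l(x_i)$, $0\leq l\leq n$, of an $(n,\varepsilon/2)$-generator, so its cover varies with $n$. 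As for the dichotomy you leave open (scales at which $X$ admits no finite cover by small balls), the paper does not resolve it either: its proof simply starts from a finite $(n,\varepsilon/2)$-generator $G=\{x_1,\ldots,x_k\}$, and finiteness of $G_n(\varepsilon)$ is established only in Proposition \ref{propadmissivel}, for metrics obtained by restriction from the one-point compactification, which are the admissible metrics actually used in the variational principle; so on this point your proof is no less complete than the paper's.
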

\begin{proof}
First we claim that, for any admissible covering $\alpha$ of $X$,
it follows that $g(|\alpha|) \leq h(T, \alpha)$, where $|\alpha|$
is the maximum of the diameters of $A \in \alpha$. In fact, the
elements of $\alpha^n$ are given by $A_0 \cap T^{-1}(A_1) \cap
\ldots \cap T^{-n}(A_n)$, where $A_i \in \alpha$. Let $\beta$ be a
sub-covering of $\alpha^n$. For each $B \in \beta$, take an $x \in
B$ and consider $G$ the set of all such points. We claim that $G$
is an $(n, |\alpha|)$-generator set of $X$. In fact, let $y \in X$
and take some $A_0 \cap T^{-1}(A_1) \cap \ldots \cap T^{-n}(A_n)
\in \beta$ containing $y$. Taking $x \in G$ such that $x \in A_0
\cap T^{-1}(A_1) \cap \ldots \cap T^{-n}(A_n)$, we have that
$d(T^i(x), T^i(y)) < |\alpha|$, for every $i \in \{0, 1, \ldots,
n\}$, since $T^i(x), T^i(y) \in A_i$. Hence, for each sub-covering
$\beta$ of $\alpha^n$, there exists an $(n, |\alpha|)$-generator
set of $X$ such that $G_n(|\alpha|) \leq \# G \leq \# \beta$. Thus
we get that $G_n(|\alpha|) \leq N(\alpha^{n})$. Taking logarithms,
dividing by $n$ and taking limits, it follows as claimed that
$g(|\alpha|) \leq h(T, \alpha)$.

Now we claim that, for all $\varepsilon > 0$, there exists
$\delta_{\varepsilon} \in (\varepsilon/2, \varepsilon)$ such that
$h(T, \alpha_{\delta_{\varepsilon}}) \leq g(\varepsilon/2)$, where
$\alpha_{\delta_{\varepsilon}}$ is an admissible covering of balls
with radius equals to $\delta_{\varepsilon}$. In fact, if $G =
\{x_1, \ldots, x_k\}$ is an $(n, \varepsilon/2)$-generator of $X$,
for every $\delta \in (\varepsilon/2, \varepsilon)$, we have that
\[
\beta_{\delta} = \{B(x_i, \delta) \cap \ldots \cap
T^{-n}(B(T^n(x_i), \delta)) : x_i \in G \}
\]
is a covering of $X$. To see this, given $x \in X$, there exists
$x_i \in G$ such that $d_n(x, x_i) < \varepsilon/2 < \delta$ and
thus we have that
\[
x \in B(x_i, \delta) \cap \ldots \cap T^{-n}(B(T^n(x_i), \delta)).
\]
This implies that
\[
\alpha_{\delta} = \{B(T^l(x_i), \delta) : x_i \in G, 0 \leq l \leq
n\}
\]
is a finite covering of $X$, for every $\delta \in (\varepsilon/2,
\varepsilon)$. Since $d$ is an admissible metric, there exists
$\delta_{\varepsilon} \in (\varepsilon/2, \varepsilon)$ such that
$\alpha_{\delta_{\varepsilon}}$ is an admissible covering of balls
with radius equals to $\delta_{\varepsilon}$. Furthermore we have
that $\beta_{\delta_{\varepsilon}}$ is a sub-covering of
$\alpha_{\delta_{\varepsilon}}^n$. Hence, for each $(n,
\varepsilon/2)$-generator $G$ of $X$, there exist
$\delta_{\varepsilon} \in (\varepsilon/2, \varepsilon)$ and a
sub-covering $\beta_{\delta_{\varepsilon}}$ of
$\alpha_{\delta_{\varepsilon}}^n$ such that
$N(\alpha_{\delta_{\varepsilon}}) \leq \#
\beta_{\delta_{\varepsilon}} \leq \# G$. Thus it follows that
$N(\alpha_{\delta_{\varepsilon}}) \leq G_n(\varepsilon/2)$. Taking
logarithms, dividing by $n$ and taking limits, we get as claimed
that there exists $\delta_{\varepsilon} \in (\varepsilon/2,
\varepsilon)$ such that $h(T, \alpha_{\delta_{\varepsilon}}) \leq
g(\varepsilon/2)$. Since $|\alpha_{\delta_{\varepsilon}}| \leq
2\delta_{\varepsilon} \leq 2\varepsilon$, we have that
\[
g(2\varepsilon) \leq g(|\alpha_{\delta_{\varepsilon}}|) \leq h(T,
\alpha_{\delta_{\varepsilon}}) \leq g(\varepsilon/2).
\]
Taking limits with $\varepsilon \downarrow 0$, it follows that
\[
h^d(T) = \lim_{\varepsilon \downarrow 0} h(T,
\alpha_{\delta_{\varepsilon}}) = \sup_{\varepsilon > 0} h(T,
\alpha_{\delta_{\varepsilon}}).
\]

In order to complete the proof, it remains to show that the above
supremum is equal to $h(T)$. For any admissible covering $\alpha$
of $X$, take $\varepsilon$ a Lebesgue number of this covering. We
claim that $N(\alpha^n) \leq N(\alpha_{\delta_{\varepsilon}}^n)$,
where $\alpha_{\delta_{\varepsilon}}$ is given above. In fact,
since every element of $\alpha_{\delta_{\varepsilon}}^n$ if given
by $B_0 \cap \ldots \cap T^{-n}(B_n)$, where $\{B_0, \ldots,
B_n\}$ are balls of radius $\delta_{\varepsilon} < \varepsilon$,
there exist $\{A_0, \ldots, A_n\} \subset \alpha$ such that $B_i
\subset A_i$, for all $i \in \{0, \ldots, n\}$. Thus we have that
\[
B_0 \cap \ldots \cap T^{-n}(B_n) \subset A_0 \cap \ldots \cap
T^{-n}(A_n),
\]
showing that, for each sub-covering $\beta$ of
$\alpha_{\delta_{\varepsilon}}^n$, there exists a sub-covering
$\gamma$ of $\alpha^n$ such that $N(\alpha^n) \leq \# \gamma \leq
\# \beta$. Hence we get as claimed that $N(\alpha^n) \leq
N(\alpha_{\delta_{\varepsilon}}^n)$. Taking logarithms, dividing
by $n$ and taking limits, it follows that
\[
h(T, \alpha) \leq h(T, \alpha_{\delta_{\varepsilon}}) \leq
\sup_{\varepsilon > 0} h(T, \alpha_{\delta_{\varepsilon}}),
\]
which shows that
\[
h(T) = \sup_{\varepsilon > 0} h(T, \alpha_{\delta_{\varepsilon}}),
\]
completing the proof.
\end{proof}

From now one we assume that $X$ is a locally compact space. Thus
we have associated to $X$ its one point compactification, which we
will denote by $\widetilde{X}$. We have that $\widetilde{X}$ is
defined as the disjoint union of $X$ with $\{\infty\}$, where
$\infty$ is some point not in $X$ called the \emph{point at the
infinity}. The topology in $\widetilde{X}$ consists by the former
open sets in $X$ and by the sets $A \cup \{\infty\}$, where the
the complement of $A$ in $X$ is compact. If $T : X \to X$ is a
proper map, defining $\widetilde{T} : \widetilde{X} \to
\widetilde{X}$ by
\[
\widetilde{T}(\widetilde{x}) = \left\{
\begin{array}{ll}
T(\widetilde{x}), &  x \neq \infty \\
\infty, & x = \infty
\end{array}
\right.,
\]
we have that $\widetilde{T}$ is also proper map, called the
\emph{extension of $T$ to $\widetilde{X}$}. To see this, we only
need to verify that $\widetilde{T}$ is continuous at $\infty$. If
$A \cup \{\infty\}$ is a neighborhood of $\infty$, then the
complement of $A$ is compact and we have that
\[
\widetilde{T}^{-1}(A \cup \{\infty\}) = T^{-1}(A) \cup \{\infty\}
\]
is also a neighborhood of $\infty$, since $T$ is proper and thus
the complement of $T^{-1}(A)$ is also compact.

The following result shows that the restriction to $X$ of any
metric on $\widetilde{X}$ is always admissible and that their
respective entropies coincide.

\begin{proposicao}\label{propadmissivel}
Let $T : X \to X$ be a proper map, where $X$ is a locally compact
separable space. Let $d$ be the metric given by the restriction to
$X$ of some metric $\widetilde{d}$ on $\widetilde{X}$, the one
point compactification of $X$. Then it follows that $d$ is an
admissible metric and that
\[
h^d(T) = h^{\widetilde{d}}\left(\widetilde{T}\right),
\]
where $\widetilde{T}$ is the extension of $T$ to $\widetilde{X}$.
In particular, we get that $h(T) = h\left(\widetilde{T}\right)$.
\end{proposicao}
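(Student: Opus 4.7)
The plan has three steps: verify that $d$ is admissible, establish $h^d(T) = h^{\widetilde{d}}(\widetilde{T})$ directly from the generator definition, and conclude by Proposition \ref{propequiv}.

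For admissibility I exploit compactness of $\widetilde{X}$. For $x \in X$ set $r(x) = \widetilde{d}(x, \infty) > 0$. The ball $B(x, \delta)$ is admissible whenever $\delta \neq r(x)$: for $\delta < r(x)$ the set $\widetilde{B}(x, \delta)$ avoids $\infty$, so its closure in $\widetilde{X}$ also avoids $\infty$ and is therefore compact in $X$; for $\delta > r(x)$ the complement $X \setminus B(x, \delta)$ equals $\widetilde{X} \setminus \widetilde{B}(x, \delta)$, a closed subset of compact $\widetilde{X}$ not meeting $\infty$, hence compact. Condition~(1) follows by choosing $\delta_\varepsilon \in (a, b)$ outside the finite set $\{r(x_1), \ldots, r(x_k)\}$, which is possible since $(a,b)$ is uncountable. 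For condition~(2), given an admissible cover $\alpha$ of $X$, I lift it to an open cover $\widetilde{\alpha}$ of $\widetilde{X}$ by adjoining $\infty$ to every element whose complement in $X$ is compact; at least one such element exists, else $X$ would be a finite union of relatively compact sets and thus compact. A $\widetilde{d}$-Lebesgue number of $\widetilde{\alpha}$, provided by compactness of $\widetilde{X}$, doubles as a $d$-Lebesgue number of $\alpha$ in $X$, since $d = \widetilde{d}|_X$ and each $\widetilde{A} \cap X$ equals the corresponding $A$.

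For the entropy equality, the direction $h^{\widetilde{d}}(\widetilde{T}) \leq h^d(T)$ is immediate: given an $(n, \varepsilon)$-generator $G$ of $X$ in $d$, the set $G \cup \{\infty\}$ is an $(n, \varepsilon)$-generator of $\widetilde{X}$, since $\widetilde{T}$ fixes $\infty$. The reverse inequality hinges on the observation that
\[
A_n = \{y \in X : \widetilde{d}(T^i y, \infty) < \varepsilon/2 \text{ for every } 0 \leq i \leq n\}
\]
has $d_n$-diameter at most $\varepsilon$ by the triangle inequality in $\widetilde{d}$, so a single point of $A_n$ (if nonempty) already is an $(n, \varepsilon)$-generator of $A_n$ in $d_n$. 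Starting from an $(n, \varepsilon/2)$-generator $\widetilde{G}$ of $\widetilde{X}$, the subset $G = \widetilde{G} \setminus \{\infty\}$ still $(n, \varepsilon/2)$-generates $X \setminus A_n$, because any $y$ outside $A_n$ must be generated in $\widetilde{d}_n$ by some element of $\widetilde{G}$ other than $\infty$. Adjoining one point of $A_n$ yields an $(n, \varepsilon)$-generator of $X$ of cardinality at most $\widetilde{G}_n(\varepsilon/2) + 1$, and taking logarithms, dividing by $n$, and letting $n \to \infty$ and $\varepsilon \downarrow 0$ gives $h^d(T) \leq h^{\widetilde{d}}(\widetilde{T})$.

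Finally, Proposition \ref{propequiv} applies both to $(\widetilde{X}, \widetilde{d})$ (admissible since $\widetilde{X}$ is compact) and to $(X, d)$ (admissible by the first step), yielding $h(\widetilde{T}) = h^{\widetilde{d}}(\widetilde{T})$ and $h(T) = h^d(T)$; combined with the equality of $d$-entropies this gives $h(T) = h(\widetilde{T})$. The delicate step is the reverse inequality in the entropy comparison, where the main insight is that orbits staying $\widetilde{d}$-close to $\infty$ throughout the first $n$ iterations are automatically pairwise $d_n$-close, so a single extra generating point handles them all, independently of $n$.
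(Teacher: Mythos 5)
Your proof is correct, and while your admissibility argument is essentially the paper's (avoid the finitely many critical radii $\widetilde{d}(x_i,\infty)$ so that $\infty$ lies off every sphere, then split into the cases $\infty$ inside the ball versus outside its closure; lift an admissible cover to an open cover of $\widetilde{X}$ and pull back a Lebesgue number), your proof of $h^d(T)=h^{\widetilde{d}}(\widetilde{T})$ takes a genuinely different route. The paper argues both inequalities by density of $X$ in $\widetilde{X}$: it perturbs the points of an $(n,\varepsilon/2)$-generator of $\widetilde{X}$ into $X$ (and, conversely, approximates points of $\widetilde{X}$ by points of $X$) and uses the triangle inequality, paying a factor $\varepsilon/2\mapsto\varepsilon$ in each direction. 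You instead exploit that $\infty$ is a fixed point: one direction is immediate by adjoining $\infty$ to a generator of $X$, and for the other you observe that the set $A_n$ of points whose first $n$ iterates all stay $\widetilde{d}$-within $\varepsilon/2$ of $\infty$ has $d_n$-diameter less than $\varepsilon$, so a single extra point absorbs it, giving $G_n(\varepsilon)\leq\widetilde{G}_n(\varepsilon/2)+1$. What your version buys is independence from the density argument (which implicitly requires $X$ noncompact, since otherwise $\infty$ is isolated), at most an additive $+1$ that vanishes in the limit, and a transparent explanation of why the point at infinity contributes nothing to the entropy; the paper's version is more symmetric and slightly shorter. Two cosmetic points: your phrase ``avoids $\infty$, so its closure also avoids $\infty$'' should be justified by noting the closure lies in the closed $\delta$-ball and $\delta<r(x)$ strictly; and your claim that some element of an admissible cover has compact complement tacitly assumes $X$ noncompact (as does the paper's corresponding step), which is harmless since for compact $X$ the Lebesgue number exists classically.
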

\begin{proof}
First we show that $G_n(\varepsilon)$ is finite for every
$\varepsilon > 0$. Let $\widetilde{G} = \{\widetilde{x}_1, \ldots,
\widetilde{x}_k\} \subset \widetilde{X}$ be an $(n,
\varepsilon/2)$-generator set of $\widetilde{X}$. By the density
of $X$ in $\widetilde{X}$, it follows that there exist $\{x_1,
\ldots, x_k\} \subset X$, such that $\widetilde{d}_n(x_i,
\widetilde{x}_i) < \varepsilon/2$. If $x \in X \subset
\widetilde{X}$, we have that $\widetilde{d}_n(x, \widetilde{x}_i)
< \varepsilon/2$, for some $\widetilde{x}_i \in \widetilde{G}$.
Hence it follows that
\[
d_n(x, x_i) \leq \widetilde{d}_n(x, \widetilde{x}_i) +
\widetilde{d}_n(x_i, \widetilde{x}_i) < \varepsilon/2 +
\varepsilon/2 = \varepsilon,
\]
showing that $G = \{x_1, \ldots, x_k\}$ is an $(n,
\varepsilon)$-generator set of $X$. If we choose $\widetilde{G}$
such that $\# \widetilde{G} = \widetilde{G}_n(\varepsilon/2)$,
then we get as claimed that
\[
G_n(\varepsilon) \leq \widetilde{G}_n(\varepsilon/2) < \infty.
\]

Now assume that $\alpha_{\delta} = \{B(x_1, \delta), \ldots,
B(x_k, \delta)\}$ is a covering of $X$, for every $\delta \in (a,
b)$, where $0 < a < b$. Since, for each fixed $\delta$, the number
of balls are finite, it follows that there exists
$\delta_{\varepsilon} \in (a, b)$ such that $\infty \notin
\widetilde{S}(x_i, \delta_{\varepsilon})$, for every $i \in \{1,
\ldots, k\}$, where $\widetilde{S}(x, r)$ is the sphere in
$\widetilde{X}$ of radius $r$ centered in $x$. Denoting by
$\widetilde{B}(x, r)$ the open ball in $\widetilde{X}$ of radius
$r$ centered in $x$, it remains two alternatives: 1) the point
$\infty$ is inside $\widetilde{B}(x_i, \delta_{\varepsilon})$ or
2) the point $\infty$ is not in the closure of $\widetilde{B}(x_i,
\delta_{\varepsilon})$. In the first case, the complement of
$B(x_i, \delta_{\varepsilon})$ in $X$ is equal to the complement
of $\widetilde{B}(x_i, \delta_{\varepsilon})$ in $X$, which is
compact. In the second alternative, there exist a open
neighborhood $U$ of $\infty$ which has empty intersection with
$\widetilde{B}(x_i, \delta_{\varepsilon})$. Thus $B(x_i,
\delta_{\varepsilon}) = \widetilde{B}(x_i, \delta_{\varepsilon})$
is in the complement of $U$ in $X$, which is compact. This shows
that the closure or the complement of $B(x_i,
\delta_{\varepsilon})$ in $X$ is compact, showing that
$\alpha_{\delta_{\varepsilon}}$ is already admissible.

We show now that every admissible covering of $X$ has a Lebesgue
number. If $\alpha$ is admissible covering of $X$, there exists an
open covering $\widetilde{\alpha}$ of $\widetilde{X}$ such that $A
\in \alpha$ if and only if there exists $\widetilde{A} \in
\widetilde{\alpha}$, with $A = \widetilde{A} \cap X$. In fact, if
$\alpha$ is an admissible covering of $X$, there exists at least
one $A \in \alpha$ with compact complement in $X$ and such that
its closure in $X$ is not compact. If we define $\widetilde{A} = A
\cup \{\infty\}$, we have that $\widetilde{A}$ is an open
neighborhood of $\{\infty\}$ in $\widetilde{X}$. Thus defining
\[
\widetilde{\alpha} = \left\{\widetilde{A}\right\} \cup \{B \in
\alpha : B \neq A\},
\]
it follows as claimed that $\widetilde{\alpha}$ is an open
covering of $\widetilde{X}$ such that $A \in \alpha$ if and only
if there exists $\widetilde{A} \in \widetilde{\alpha}$, with $A =
\widetilde{A} \cap X$. If $\varepsilon$ is a Lebesgue number for
$\widetilde{\alpha}$, then we claim that $\varepsilon$ is a also a
Lebesgue number for $\alpha$. To see this, if $B(x, \varepsilon)$
is a ball in $X$, there exists $\widetilde{A} \in
\widetilde{\alpha}$ such that $B(x, \varepsilon) \subset
\widetilde{A}$. Thus it follows that $B(x, \varepsilon) \subset
A$, where $A = \widetilde{A} \cap X \in \alpha$, completing the
proof that $d$ is an admissible metric.

Finally we show that $h^d(T) =
h^{\widetilde{d}}\left(\widetilde{T}\right)$. Let $G = \{x_1,
\ldots, x_k\}$ be  an $(n, \varepsilon/2)$-generator set of $X$.
By the density of $X$ in $\widetilde{X}$, if $\widetilde{x} \in
\widetilde{X}$, there exists $x \in X$ such that
$\widetilde{d}_n(\widetilde{x}, x) < \varepsilon/2$, since $d_n$
is topologically equivalent to $d$. Thus we have that $d_n(x, x_i)
< \varepsilon/2$, for some $x_i \in G$. Hence it follows that
\[
\widetilde{d}_n(\widetilde{x}, x_i) \leq
\widetilde{d}_n(\widetilde{x}, x) + d_n(x, x_i) < \varepsilon/2 +
\varepsilon/2 = \varepsilon,
\]
showing that $G = \{x_1, \ldots, x_k\}$ is an $(n,
\varepsilon)$-generator set of $\widetilde{X}$. Choosing $G$ such
that $\# G = G_n(\varepsilon/2)$, we get that
$\widetilde{G}_n(\varepsilon) \leq  G_n(\varepsilon/2)$. Since we
have also shown above that $G_n(\varepsilon) \leq
\widetilde{G}_n(\varepsilon/2)$, taking logarithms, dividing by
$n$ and taking limits, we get that
\[
g(\varepsilon) \leq \widetilde{g}(\varepsilon/2) \qquad \mbox{and}
\qquad \widetilde{g}(\varepsilon) \leq g(\varepsilon/2).
\]
Therefore it follows that
\[
h^d(T) = \lim_{\varepsilon \downarrow 0} g(4\varepsilon) \leq
\lim_{\varepsilon \downarrow 0} \widetilde{g}(2\varepsilon) =
h^{\widetilde{d}}\left(\widetilde{T}\right) \leq \lim_{\varepsilon
\downarrow 0} g(\varepsilon) = h^d(T),
\]
showing that $h^d(T) =
h^{\widetilde{d}}\left(\widetilde{T}\right)$. The last statement
now follows applying Proposition \ref{propequiv}.
\end{proof}

\section{The variational principle}

In this section, we present a full extension for non-compact sets
of the well known variational principle involving entropies. We
start with a proper map $T : X \to X$ and remember the concept of
entropy associated to some given $T$-invariant probability on $X$.
A \emph{$T$-invariant probability on $X$} is a Borel measure $\mu$
such that $\mu(X) = 1$ and $\mu(T^{-1}(A)) = \mu(A)$, for all
Borel subsets $A \subset X$. We denote by $\mathcal{P}_T(X)$ the
set of all $T$-invariant probability on $X$. A Borel partition
$\mathcal{A}$ of $X$ is a partition of $X$ where all of its
elements are Borel sets. Given a finite Borel partition
$\mathcal{A}$ of $X$, for every $n \in \mathbb{N}$, we have that
the set given by
\[
\mathcal{A}^n = \{A_0 \cap T^{-1}(A_1) \cap \ldots \cap
T^{-n}(A_n) : A_i \in \mathcal{A}\}
\]
is also a finite Borel partition of $X$, since $T$ is continuous.
For a given finite Borel partition $\mathcal{A}$ of $X$, we define
its associated $n$-entropy as
\[
H(\mathcal{A}^n) = \sum_{B \in \mathcal{A}^n} \phi(\mu(B)),
\]
where $\phi : [0, 1] \to \mathbb{R}$ is the continuous function
given by
\[
\phi(x) = \left\{
\begin{array}{cl}
-x\log(x), &  x \in (0, 1] \\
0, & x = 0
\end{array}
\right..
\]
It can be shown that the sequence $H(\mathcal{A}^n)$ is
subadditive, which implies the existence of the following limit
\[
h(T, \mathcal{A}) = \lim_{n \to \infty} \frac{1}{n}
H(\mathcal{A}^n).
\]
The \emph{$\mu$-entropy of the map $T$} is thus defined as
\[
h_{\mu}(T) = \sup_{\mathcal{A}} h(T, \mathcal{A}),
\]
where the supremum is taken over all finite Borel partition
$\mathcal{A}$ of $X$. The next result was first proved in the
Lemma 1.5 of \cite{handel}, where they used the fact that the
supremum of the measure entropies taken over all invariant
probabilities is equals to that one just taken over all ergodic
invariant probabilities. We present here an elementary proof of
the quoted result which dos not use this fact. In the following
$\mathcal{P}_T(X)$ denotes the set of all $T$-invariant
probabilities on $X$.

\begin{lema}\label{lema}
Let $T : X \to X$ be a proper map and $\widetilde{T} :
\widetilde{X} \to \widetilde{X}$ be its extension in the one point
compactification $\widetilde{X}$ of the locally compact separable
space $X$. Then it follows that
\[
\sup_{\mu} h_{\mu}(T) = \sup_{\widetilde{\mu}}
h_{\widetilde{\mu}}\left(\widetilde{T}\right),
\]
where the suprema are taken, respectively, over $\mathcal{P}_T(X)$
and over $\mathcal{P}_{\widetilde{T}}(\widetilde{X})$.
\end{lema}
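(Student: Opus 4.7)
The plan is to exploit that $\{\infty\}$ is both fixed by $\widetilde{T}$ and equal to its own $\widetilde{T}$-preimage: since $\widetilde{T}(\infty)=\infty$ and $\widetilde{T}(X) \subset X$, we have $\widetilde{T}^{-1}(\{\infty\}) = \{\infty\}$ and $\widetilde{T}^{-1}(X) = X$. Given $\widetilde{\mu} \in \mathcal{P}_{\widetilde{T}}(\widetilde{X})$, set $t = \widetilde{\mu}(\{\infty\})$; if $t = 1$ then $\widetilde{\mu} = \delta_{\infty}$ and $h_{\widetilde{\mu}}(\widetilde{T}) = 0$, while if $t < 1$ the normalized restriction $\nu(A) = \widetilde{\mu}(A)/(1-t)$ (for $A \subset X$ Borel) defines an element of $\mathcal{P}_T(X)$, as follows from $\widetilde{T}^{-1}(A) = T^{-1}(A)$ for such $A$. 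Conversely, every $\mu \in \mathcal{P}_T(X)$ extends to some $\widetilde{\mu} \in \mathcal{P}_{\widetilde{T}}(\widetilde{X})$ by declaring $\widetilde{\mu}(\{\infty\}) = 0$; invariance is then straightforward to check.

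To prove $\sup_{\widetilde{\mu}} h_{\widetilde{\mu}}(\widetilde{T}) \leq \sup_{\mu} h_{\mu}(T)$, I take an arbitrary finite Borel partition $\widetilde{\mathcal{A}}$ of $\widetilde{X}$ and refine it to $\widetilde{\mathcal{A}}' = \widetilde{\mathcal{A}} \vee \{\{\infty\}, X\}$; monotonicity of $h(\widetilde{T}, \cdot)$ under refinement yields $h(\widetilde{T}, \widetilde{\mathcal{A}}) \leq h(\widetilde{T}, \widetilde{\mathcal{A}}')$. Writing $\widetilde{\mathcal{A}}' = \{\{\infty\}, A_1, \ldots, A_k\}$ with $A_i \subset X$ and $\mathcal{A} = \{A_1, \ldots, A_k\}$, the fact that $\widetilde{T}^{-j}(\{\infty\}) = \{\infty\}$ while $\widetilde{T}^{-j}(A_i) \subset X$ forces the nonempty members of $(\widetilde{\mathcal{A}}')^n$ to be exactly $\{\infty\}$ together with the members of $\mathcal{A}^n$. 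Applying the identity $\phi((1-t)x) = (1-t)\phi(x) - (1-t)x\log(1-t)$ and summing over $B \in \mathcal{A}^n$, a direct computation gives
\[
H_{\widetilde{\mu}}\bigl((\widetilde{\mathcal{A}}')^n\bigr) = \phi(t) + \phi(1-t) + (1-t)\, H_{\nu}(\mathcal{A}^n).
\]
Dividing by $n$ and passing to the limit, the bounded constant $\phi(t)+\phi(1-t)$ disappears and I obtain $h(\widetilde{T}, \widetilde{\mathcal{A}}') = (1-t)\,h(T, \mathcal{A}) \leq h_{\nu}(T)$, whence $h_{\widetilde{\mu}}(\widetilde{T}) \leq h_{\nu}(T) \leq \sup_{\mu} h_{\mu}(T)$.

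The reverse inequality uses the same identity in the opposite direction. For $\mu \in \mathcal{P}_T(X)$ and any finite Borel partition $\mathcal{A}$ of $X$, take its extension $\widetilde{\mu}$ (with $\widetilde{\mu}(\{\infty\}) = 0$) and the partition $\widetilde{\mathcal{A}}' = \{\{\infty\}\} \cup \mathcal{A}$ of $\widetilde{X}$; the computation just performed, now with $t = 0$, collapses to $h(\widetilde{T}, \widetilde{\mathcal{A}}') = h(T, \mathcal{A})$. Taking the supremum over $\mathcal{A}$ yields $h_{\widetilde{\mu}}(\widetilde{T}) \geq h_{\mu}(T)$, hence $\sup_{\widetilde{\mu}} h_{\widetilde{\mu}}(\widetilde{T}) \geq \sup_{\mu} h_{\mu}(T)$.

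The main obstacle is the explicit decomposition of $(\widetilde{\mathcal{A}}')^n$, which rests on the crucial disjointness $\widetilde{T}^{-j}(\{\infty\}) = \{\infty\}$ for every $j \geq 0$: this is what makes every mixed intersection empty and permits the clean affine split of $H_{\widetilde{\mu}}((\widetilde{\mathcal{A}}')^n)$ into $(1-t)\,H_{\nu}(\mathcal{A}^n)$ plus the bounded constant $\phi(t)+\phi(1-t)$. Once this is in place no appeal to ergodic decomposition is required, matching the paper's claim of an elementary proof.
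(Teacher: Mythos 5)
Your proposal is correct and follows essentially the same route as the paper: the easy direction by extending $\mu$ to $\widetilde{X}$ with zero mass at $\infty$, and the hard direction by normalizing $\widetilde{\mu}|_{X}$ and comparing partition entropies through the identity $\phi(ax)=a\phi(x)-ax\log a$, with no appeal to ergodic decomposition. The only minor difference is that you first refine $\widetilde{\mathcal{A}}$ by $\{\{\infty\},X\}$ and use monotonicity of $h(\widetilde{T},\cdot)$ under refinement to get an exact affine identity, whereas the paper works with $\widetilde{\mathcal{A}}$ directly and absorbs the contribution of the atom containing $\infty$ into a bounded additive constant.
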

\begin{proof}
If $\mu \in \mathcal{P}_T(X)$, definig
$\widetilde{\mu}\left(\widetilde{A}\right) =
\mu\left(\widetilde{A} \cap X\right)$, it is immediate that
$\widetilde{\mu} \in \mathcal{P}_{\widetilde{T}}(\widetilde{X})$,
since $X$ and $\{\infty\}$ are $\widetilde{T}$-invariant sets. It
is also immediate that $h_{\mu}(T) =
h_{\widetilde{\mu}}\left(\widetilde{T}\right)$, showing that
\[
\sup_{\mu} h_{\mu}(T) \leq \sup_{\widetilde{\mu}}
h_{\widetilde{\mu}}\left(\widetilde{T}\right).
\]

Now let $\widetilde{\mu} \in
\mathcal{P}_{\widetilde{T}}(\widetilde{X})$. If
$\widetilde{\mu}(\infty) = 1$, it is immediate that
$h_{\widetilde{\mu}}\left(\widetilde{T}\right) = 0$, since $X$ and
$\{\infty\}$ are $\widetilde{T}$-invariant sets. Thus we can
assume that $\widetilde{\mu}(\infty) = c < 1$. It is also
immediate that
\[
\mu = {\textstyle \left(\frac{1}{1 - c}\right)}
\widetilde{\mu}|_{\mathcal{B}(X)} \in \mathcal{P}_T(X).
\]
We claim that $h_{\widetilde{\mu}}\left(\widetilde{T}\right) \leq
h_{\mu}(T)$. If $\widetilde{\mathcal{A}} = \{\widetilde{A}_1,
\ldots, \widetilde{A}_l\}$ is a measurable partition of
$\widetilde{X}$, defining $A_i =  \widetilde{A}_i \cap X$, it
follows that $\mathcal{A} = \{A_1, \ldots, A_l\}$ is a measurable
partition of $X$. We have that, $B \in \mathcal{A}^n$ if and only
if there exists $\widetilde{B} \in \widetilde{\mathcal{A}}^n$,
with $B =  \widetilde{B} \cap X$. This follows, because
\[
\widetilde{T}^{-j}\left(\widetilde{A}_i\right) \cap X =
T^{-j}(A_i)
\]
for any $j \in \{0, \ldots, n\}$ and any $i \in \{1, \ldots, l\}$,
since $X$ and $\{\infty\}$ are $\widetilde{T}$-invariant sets. It
follows that
\[
H(\widetilde{\mathcal{A}}^n) = \sum_{\widetilde{B} \in
\widetilde{\mathcal{A}}^n}
\phi\left(\widetilde{\mu}\left(\widetilde{B}\right)\right) =
\phi\left(\widetilde{\mu}\left(\widetilde{B}_{\infty}\right)\right)
+ \sum_{\widetilde{B} \neq \widetilde{B}_{\infty}}
\phi\left(\widetilde{\mu}\left(\widetilde{B}\right)\right),
\]
where $\infty \in \widetilde{B}_{\infty}$. Since
$\widetilde{\mu}\left(\widetilde{B}\right) = (1 - c) \mu(B)$, for
each $\widetilde{B} \neq \widetilde{B}_{\infty}$, it follows that
\[
H(\widetilde{\mathcal{A}}^n) = b + \sum_{B \in \mathcal{A}^n}
\phi(a\mu(B)),
\]
where $a = 1 - c$ and $b =
\phi\left(\widetilde{\mu}\left(\widetilde{B}_{\infty}\right)\right)
- \phi(a\mu(B))$. Hence
\begin{eqnarray}
H(\widetilde{\mathcal{A}}^n) & = & b - \sum_{B \in \mathcal{A}^n} a\mu(B)\log(a\mu(B)) \nonumber \\
& = & b - a\left(\sum_{B \in \mathcal{A}^n} \mu(B)(\log(a) + \log(\mu(B))\right) \nonumber \\
& = & b - a\log(a)\left(\sum_{B \in \mathcal{A}^n} \mu(B)\right) + a\left(\sum_{B \in \mathcal{A}^n} \phi(\mu(B))\right) \nonumber \\
& = & b + \phi(a)+ a H(\mathcal{A}^n). \nonumber
\end{eqnarray}
It follows that $H(\widetilde{\mathcal{A}}^n) \leq d +
H(\mathcal{A}^n)$, since $a = 1 - c \leq 1$ and
\[
b + \phi(a) \leq d = 2\max\{\phi(x) : x \in [0, 1]\}.
\]
Dividing by $n$ and taking limits, we get that
\[
h_{\widetilde{\mu}}\left(\widetilde{T},
\widetilde{\mathcal{A}}\right) \leq h_{\mu}(T, \mathcal{A}) \leq
h_{\mu}(T).
\]
Since $\widetilde{\mathcal{A}}$ is arbitrary, we have that
\[
h_{\widetilde{\mu}}\left(\widetilde{T}\right) \leq h_{\mu}(T) \leq
\sup_{\mu} h_{\mu}(T).
\]
Since $\widetilde{\mu} \in
\mathcal{P}_{\widetilde{T}}(\widetilde{X})$ is also arbitrary, it
follows that
\[
\sup_{\widetilde{\mu}}
h_{\widetilde{\mu}}\left(\widetilde{T}\right) \leq \sup_{\mu}
h_{\mu}(T),
\]
completing the proof.
\end{proof}

Now we prove the variational principle for entropies in a locally
compact separable space. In Lemma 1.5 of \cite{handel}, it was
proved that
\[
\sup_{\mu} h_{\mu}(T) = \inf_{d} h_d(T).
\]
Here we improve this result showing that the above infimum is
always attained at $d$, whenever $d$ is an admissible metric.
Moreover the supremum and the minimum coincide with the
topological entropy introduced in the Section \ref{sectopent}.

\begin{teorema}
Let $T : X \to X$ be a continuous map, where $X$ is a locally
compact separable space. Then it follows that
\[
\sup_{\mu} h_{\mu}(T) = h(T) = \min_{d} h_d(T),
\]
where the minimum is attained whenever $d$ is an admissible
metric.
\end{teorema}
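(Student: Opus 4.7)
The plan is to reduce the theorem to the classical (compact-space) AKM variational principle applied to the extension $\widetilde{T}:\widetilde{X}\to\widetilde{X}$, using Propositions \ref{propequiv}, \ref{propadmissivel} and Lemma \ref{lema} as the glue that transports entropies between $X$ and its one-point compactification. Since $X$ is locally compact separable Hausdorff, $\widetilde{X}$ is compact Hausdorff and second countable, hence metrizable, and Proposition \ref{propadmissivel} applies.

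For the equality $h(T)=\sup_\mu h_\mu(T)$ I would chain three identities. Proposition \ref{propadmissivel} yields $h(T)=h(\widetilde{T})$. Since $\widetilde{X}$ is compact, the classical AKM variational principle gives $h(\widetilde{T})=\sup_{\widetilde{\mu}}h_{\widetilde{\mu}}(\widetilde{T})$, the supremum running over $\mathcal{P}_{\widetilde{T}}(\widetilde{X})$. Finally, Lemma \ref{lema} translates this back into a supremum over $\mathcal{P}_T(X)$, so
\[
h(T)=h(\widetilde{T})=\sup_{\widetilde{\mu}}h_{\widetilde{\mu}}(\widetilde{T})=\sup_{\mu}h_\mu(T).
\]

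For the minimum I would combine the inequality $\sup_\mu h_\mu(T)\le h_d(T)$, valid for every metric $d$ on $X$ (the easy half of the variational principle recalled from Lemma 1.5 of \cite{handel}), with Proposition \ref{propequiv}. For any admissible $d$ one has $h_d(T)\le h^d(T)=h(T)$, where the inequality is immediate from the definitions (the supremum defining $h^d(T)$ ranges over all subsets, whereas $h_d(T)$ takes it only over compact ones) and the equality is Proposition \ref{propequiv}. Together with the previous paragraph this produces, for every admissible $d$,
\[
\sup_\mu h_\mu(T)\le h_d(T)\le h(T)=\sup_\mu h_\mu(T),
\]
so $h_d(T)=h(T)$ for every such $d$. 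Hence the infimum over all metrics is attained at every admissible metric and equals $h(T)$; Proposition \ref{propadmissivel} guarantees that admissible metrics exist (namely, restrictions to $X$ of metrics on $\widetilde{X}$), so the minimum is not vacuous.

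The main obstacle is conceptual rather than computational: one must recognise that passing to $\widetilde{X}$ simultaneously converts all three ingredients --- topological, measure-theoretic and metric entropies --- into their compact counterparts, so that a single application of the classical principle on $\widetilde{X}$ discharges the whole statement. The only external tool invoked is the classical AKM variational principle for $\widetilde{T}$ on the compact metric space $\widetilde{X}$, which is standard, and no further estimates are required once Propositions \ref{propequiv}, \ref{propadmissivel} and Lemma \ref{lema} are in place.
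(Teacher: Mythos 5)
Your proof is correct and follows essentially the same route as the paper: transfer to the one-point compactification via Proposition \ref{propadmissivel} and Lemma \ref{lema}, apply the classical compact variational principle to $\widetilde{T}$, and close the chain with the easy inequality $\sup_\mu h_\mu(T)\le h_d(T)$ (the paper cites Proposition 1.4 of \cite{handel} for this) together with $h_d(T)\le h^d(T)=h(T)$ from Proposition \ref{propequiv}. The only difference is cosmetic bookkeeping in how the chain of inequalities is arranged.
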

\begin{proof}
The the variational principle for compact spaces states that
\[
h\left(\widetilde{T}\right) = \sup_{\widetilde{\mu}}
h_{\widetilde{\mu}}\left(\widetilde{T}\right).
\]
By Proposition 1.4 of \cite{handel}, we have that
\[
\sup_{\mu} h_{\mu}(T) \leq \inf_{d} h_d(T).
\]
Applying Lemma \ref{lema}, it follows that
\[
h\left(\widetilde{T}\right) = \sup_{\mu} h_{\mu}(T) \leq \inf_{d}
h_d(T).
\]
Applying Propositions \ref{propequiv} and \ref{propadmissivel}, we
get that
\[
h_d(T) \leq h^d(T) = h(T) = \sup_{\mu} h_{\mu}(T) \leq \inf_{d}
h_d(T),
\]
where, in the first two terms, $d$ is any admissible metric.
\end{proof}

\section{Topological entropy of automorphisms}

In this section, we compute the topological entropy for
automorphisms of simply connected nilpotent Lie groups. We start
with linear isomorphisms of a finite dimensional vector space. For
this, we need to determine the recurrent set of a linear
isomorphism in terms of its multiplicative Jordan decomposition
(see Lema 7.1, page 430, of \cite{helgason}). If $T : V \to V$ is
a linear isomorphism, where $V$ is a finite dimensional vector
space, then we can write $T = T_HT_ET_U$, where $T_H : V \to V$ is
diagonalizable in $V$ with positive eigenvalues, $T_E : V \to V$
is an isometry relative to some appropriate norm and $T_U : V \to
V$ is a linear isomorphism which can be decomposed into the sum of
the identity map plus some nilpotent linear map. The linear maps
$T_H$, $T_E$ and $T_U$ commute, are unique and called,
respectively, the \emph{hyperbolic}, the \emph{elliptic} and the
\emph{unipotent} components of the multiplicative Jordan
decomposition of $T$. In the next result, we prove that the
recurrent set $R(T)$ of $T$ is given as the intersection of the
fixed points of the hyperbolic and unipotent components. Using
this characterization, we also show that the topological entropy
of $T$ always vanishes.

\begin{proposicao}\label{proplinear}
Let $T : V \to V$ be a linear isomorphism, where $V$ is a finite
dimensional vector space. Then the recurrent set of $T$ is given
by
\[
R(T) = \emph{fix}\left(T_H\right) \cap \emph{fix}\left(T_U\right).
\]
Furthermore, it follows that $h(T) = 0$.
\end{proposicao}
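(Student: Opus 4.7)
My plan is to establish the characterization of $R(T)$ by proving both inclusions and then derive $h(T) = 0$ from the variational principle of Section 3. Throughout I work with a norm $\|\cdot\|$ on $V$ relative to which $T_E$ is an isometry.

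For the inclusion $\emph{fix}(T_H) \cap \emph{fix}(T_U) \subseteq R(T)$, the three commuting components reduce $T$ to $T_E$ on this subspace, so the orbit of any $v$ lies on the compact sphere of radius $\|v\|$. The standard argument for isometries of compact sets---extract a convergent subsequence $T_E^{n_{k_j}}(v)$ by sequential compactness, then use the isometry property to turn the resulting Cauchy differences into $T_E^{n_{k_{j+1}} - n_{k_j}}(v) \to v$---shows $v \in R(T)$.

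For the reverse inclusion, decompose $V = \bigoplus_\lambda V_\lambda$ into eigenspaces of $T_H$; these are preserved by $T_E$ and $T_U$ since all three components commute. If $v = \sum v_\lambda$ is recurrent, then each component satisfies $T^{n_k}(v_\lambda) \to v_\lambda$. On $V_\lambda$ the identity $T^n = \lambda^n T_E^n T_U^n$ combined with the isometry of $T_E$ yields $\|T^n(v_\lambda)\| = \lambda^n \|T_U^n(v_\lambda)\|$, and the nilpotency of $T_U - I$ makes $\|T_U^n(v_\lambda)\|$ of polynomial order in $n$, bounded below when $v_\lambda \neq 0$. For $\lambda \neq 1$ this quantity either blows up or decays, contradicting recurrence, so $v_\lambda = 0$. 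Hence $v \in V_1 = \emph{fix}(T_H)$, on which $T = T_E T_U$, so $\|T^n(v)\| = \|T_U^n(v)\|$ grows polynomially unless $T_U(v) = v$; thus $v \in \emph{fix}(T_U)$ as well.

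For $h(T) = 0$, I would invoke $h(T) = \sup_\mu h_\mu(T)$ from the variational principle and show $h_\mu(T) = 0$ for each $\mu \in \mathcal{P}_T(V)$. The topological Poincar\'{e} recurrence theorem on the Polish space $V$ gives $\mu(R(T)) = 1$, so only the restriction $T_E|_{R(T)}$ is relevant. By ergodic decomposition I may assume $\mu$ ergodic, and then ergodicity forces $\|\cdot\|$ to be $\mu$-a.s.\ constant, so the support of $\mu$ lies in a single compact sphere. On this compact $T_E$-invariant set the action is isometric, $d_n = d$, and the topological entropy vanishes; hence $h_\mu = 0$ by the classical compact variational principle. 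The main subtle step is this last reduction to a compact invariant subset, which the isometric nature of $T_E$ on $R(T)$ readily supplies.
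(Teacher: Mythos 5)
Your argument is correct, but it reaches the result by a noticeably different route than the paper. For the inclusion $R(T) \subseteq \mathrm{fix}(T_H) \cap \mathrm{fix}(T_U)$, the paper passes to the induced map $\mathbb{P}T$ on projective space and quotes the characterization $R(\mathbb{P}T) = \mathrm{fix}(\mathbb{P}T_H) \cap \mathrm{fix}(\mathbb{P}T_U)$ from \cite{fpss}, which only yields $R(T) \subseteq \mathrm{eig}(T_H) \cap \mathrm{fix}(T_U)$ and must then be refined by the norm computation $|T^n v| = \lambda^n |v|$ to force $\lambda = 1$; you instead work directly in $V$, splitting along the $T_H$-eigenspaces (preserved by $T_E$ and $T_U$ by commutativity), projecting the recurrence relation onto each $V_\lambda$, and playing the exponential factor $\lambda^n$ against the polynomial behaviour of $\|T_U^n v_\lambda\|$, then killing the unipotent part on $V_1$ by polynomial growth. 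This is more self-contained (no appeal to the preprint \cite{fpss}) at the cost of redoing by hand the growth estimates that the projective result encapsulates. The inclusion $\mathrm{fix}(T_H) \cap \mathrm{fix}(T_U) \subseteq R(T)$ is the same in both: on that subspace $T$ acts as the isometry $T_E$ with precompact orbits. For $h(T) = 0$, the paper applies Poincar\'e recurrence to replace $h_\mu(T)$ by $h_\mu(T|_{R(T)})$ and then applies its own noncompact variational principle to the restriction, bounding it by $h_d(T|_{R(T)}) = 0$ for a metric making $T_E$ an isometry; you also use Poincar\'e recurrence, but then invoke ergodic decomposition so that the ($T$-invariant) norm is a.s.\ constant, confining the measure to a compact sphere where the classical compact variational principle and the isometry give $h_\mu = 0$. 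Your version uses heavier measure-theoretic machinery (ergodic decomposition on a Polish space) but only needs the compact variational principle, whereas the paper exploits its Theorem on noncompact spaces, which is the more economical step given the rest of the article; both reductions are legitimate, and the minor points you leave implicit (that the recurrence times can be taken unbounded in the isometry argument, and that $\inf_n \|T_U^n v\| > 0$ for $v \neq 0$) are routine.
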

\begin{proof}
Let $\mathbb{P}T : \mathbb{P}V \to \mathbb{P}V$ be the map induced
by $T$ in the projective space of $V$ and, for a subspace $W
\subset V$, denote by $\mathbb{P}W$ its projection in
$\mathbb{P}V$. By Proposition 2 of \cite{fpss}, we have that
\[
R(\mathbb{P}T) = \mbox{fix}\left(\mathbb{P}T_H\right) \cap
\mbox{fix}\left(\mathbb{P}T_U\right).
\]
By linearity, it is immediate that $\mathbb{P} \langle R(T)\rangle
\subset R(\mathbb{P}T)$, where $\langle A \rangle$ denotes the
linear subspace generated by $A \subset V$. Thus it follows that
\[
R(T) \subset \mbox{eig}(T_H) \cap \mbox{eig}(T_U),
\]
where $\mbox{eig}(S)$ denotes the union of the eigenspaces of a
linear map $S : V \to V$. Since $T_U$ is unipotent, all of its
eigenvalues are equals to one, implying that $\mbox{eig}(T_U) =
\mbox{fix}(T_U)$. Hence $R(T) \subset \mbox{eig}(T_H) \cap
\mbox{fix}(T_U)$. Now let $v \in R(T)$ be such that $T_H v =
\lambda v$, for some $\lambda > 0$. Since the multiplicative
Jordan decomposition is commutative, it follows that
\[
|T^n v| = |T_E^n T_H^n T_U^n v| = |T_H^n v| = \lambda^n |v|,
\]
where we used the fact that $T_E$ is an isometry relative to some
appropriate norm $|\cdot|$. This shows that $\lambda$ is equals to
one and thus that $R(T)$ is contained in $\mbox{fix}(T_H) \cap
\mbox{fix}(T_U)$. Since $\mbox{fix}(T_H) \cap \mbox{fix}(T_U)$ is
invariant by $T_E$, we can consider the restriction of $T$ to this
set, which is just the restriction of $T_E$. Thus we get that
$R(T) = \mbox{fix}(T_H) \cap \mbox{fix}(T_U)$, since the
restriction of $T_E$ to $\mbox{fix}(T_H) \cap \mbox{fix}(T_U)$ is
an isometry whose orbits have compact closure.

Now to show that $h(T) = 0$, we first note that, by the Poincar{\'e}
recurrence theorem, for every $\mu \in \mathcal{P}_T(X)$, we have
that $h_{\mu}(T) = h_{\mu}\left(T|_{R(T)}\right)$, since $R(T)$ is
a closed set. By the variational principle, it follows that
\[
h(T) = h\left(T|_{R(T)}\right) \leq h_d\left(T|_{R(T)}\right),
\]
for every metric $d$. Since $T|_{R(T)} = T_E|_{R(T)}$ is an
isometry relative to some appropriate metric $d$, we get that
$h_d\left(T|_{R(T)}\right) = 0$, completing the proof.
\end{proof}

We note that the classical formula for the $d$-entropy of a linear
isomorphism, where $d$ is the euclidian metric, is just an upper
bound for its null topological entropy. Thus it might be an
interesting problem to calculate the topological entropy of a
given automorphism $\phi$ of a noncompact Lie group $G$, since the
classical formula for its $d$-entropy, where $d$ is some invariant
metric, is as well just an upper bound for its topological
entropy. Remember that the classical formula is given by
\[
h_d(\phi) =
\sum_{|\lambda| > 1} \log |\lambda|,
\]
where $\lambda$ runs through the eigenvalues of $\mbox{d}_1\phi :
\mathfrak{g} \to \mathfrak{g}$, the differential of the
automorphism $\phi$ at the identity element of $G$.

The following theorem gives an answer for the above problem when
$G$ is a simply connected nilpotent Lie group.

\begin{teorema}
Let $\phi : G \to G$ be an automorphism, where $G$ is a simply
connected nilpotent Lie group. Then it follows that $h(\phi) = 0$.
\end{teorema}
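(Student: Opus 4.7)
The plan is to reduce the statement to the already-established fact that linear isomorphisms of a finite dimensional vector space have vanishing topological entropy (Proposition \ref{proplinear}). The bridge is the classical theorem that, for a simply connected nilpotent Lie group $G$, the exponential map $\exp : \mathfrak{g} \to G$ is a diffeomorphism, and that for any automorphism $\phi$ of a Lie group one has the intertwining identity
\[
\phi \circ \exp = \exp \circ \, \mathrm{d}_1\phi.
\]
In the nilpotent simply connected setting, this identity promotes $\exp$ to a topological conjugacy between the linear isomorphism $\mathrm{d}_1\phi : \mathfrak{g} \to \mathfrak{g}$ and the automorphism $\phi : G \to G$.

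First I would invoke that $\exp$ is a diffeomorphism in our setting, so in particular it is a proper continuous surjection, and so is its inverse $\log = \exp^{-1}$. Both $\phi$ and $\mathrm{d}_1\phi$ are homeomorphisms, hence proper maps on $G$ and $\mathfrak{g}$ respectively, so Proposition \ref{propsemiconjugacao} applies to both $\exp$ (viewed as a semi-conjugacy from $\mathrm{d}_1\phi$ to $\phi$) and to $\log$ (viewed as a semi-conjugacy from $\phi$ to $\mathrm{d}_1\phi$). Combining the two inequalities gives
\[
h(\phi) \leq h(\mathrm{d}_1\phi) \leq h(\phi),
\]
so that $h(\phi) = h(\mathrm{d}_1\phi)$.

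Finally, since $\mathrm{d}_1\phi$ is a linear isomorphism of the finite dimensional vector space $\mathfrak{g}$, Proposition \ref{proplinear} yields $h(\mathrm{d}_1\phi) = 0$, and therefore $h(\phi) = 0$. There is no serious obstacle in the argument itself; the only non-formal ingredient is the invocation of the two classical facts about simply connected nilpotent Lie groups (that $\exp$ is a global diffeomorphism and that it intertwines $\phi$ with its differential), after which the conclusion is immediate from the two previously proved propositions.
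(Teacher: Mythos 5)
Your argument is correct and is essentially the paper's own proof: both invoke that $\exp$ is a diffeomorphism for simply connected nilpotent $G$, use $\phi \circ \exp = \exp \circ\, \mathrm{d}_1\phi$ to conjugate $\phi$ with $\mathrm{d}_1\phi$, apply Proposition \ref{propsemiconjugacao} (in both directions, as you make explicit) to get $h(\phi) = h(\mathrm{d}_1\phi)$, and conclude by Proposition \ref{proplinear} that this common value is zero.
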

\begin{proof}
If $G$ is a connected and  simply connected nilpotent Lie group,
we have that the exponential map is a diffeomorphism between
$\mathfrak{g}$ and $G$ (see Theorem 1.127, page 107, of
\cite{knapp}). Since $\phi(\exp(X)) = \exp(\mbox{d}_1\phi X)$, we
get that $\phi$ and $\mbox{d}_1\phi$ are conjugated maps. By
Proposition \ref{propsemiconjugacao}, it follows that $h(\phi) =
h(\mbox{d}_1\phi) = 0$.
\end{proof}

We note that, if the fundamental group of $G$ is not trivial, it
is possible the existence of an automorphism with positive
topological entropy, even when $G$ is abelian. In fact, the map
$\phi(z) = z^2$ is an automorphism of the abelian Lie group $S^1$
and it is well known that it has topological entropy $h(\phi) =
\log(2) > 0$. We have that the canonical homomorphism $\pi :
\mathbb{R} \to S^1$, given by $\pi(x) = e^{i x}$, is a
semi-conjugation between $\phi$ and the automorphism
$\widetilde{\phi}$ of the universal covering $\mathbb{R}$ of
$S^1$, given by $\widetilde{\phi}(x) = 2x$. Although
$h\left(\widetilde{\phi}\right) = 0$, we can not apply Proposition
\ref{propsemiconjugacao} to conclude that $h(\phi) = 0$, since the
canonical homomorphism $\pi$ is a proper map if and only if the
fundamental group of $G$ is finite.

\end{document}